\newcommand\restr[2]{{\left.\kern-\nulldelimiterspace
  #1
  \vphantom{\big|}
  \right|_{#2}
  }}
\DeclareMathOperator{\ad}{ad}
\DeclareMathOperator{\Id}{id}
\DeclareMathOperator{\End}{End}
\newcommand{\F}{\mathbb F}
\newcommand{\LL}{\mathcal L}
\newtheorem{dummy}{Dummy}
\numberwithin{dummy}{section}
\numberwithin{equation}{section}
\newtheorem{lemma}[dummy]{Lemma}
\newtheorem{theorem}[dummy]{Theorem}
\newtheorem{prop}[dummy]{Proposition}
\theoremstyle{definition}
\theoremstyle{remark}
\newtheorem{rem}[dummy]{Remark}
\begin{document}

\bibliographystyle{amsalpha}

\author{Marina Avitabile}
\email{marina.avitabile@unimib.it}
\address{Dipartimento di Matematica e Applicazioni\\
  Universit\`a degli Studi di Milano - Bicocca\\
  via Cozzi 53\\
  I-20125 Milano\\
  Italy}

\author{Sandro Mattarei}
\email{mattarei@science.unitn.it}
\address{Dipartimento di Matematica\\
  Universit\`a degli Studi di Trento\\
  via Sommarive 14\\
  I-38050 Povo (Trento)\\
  Italy}

\title[Laguerre polynomials of derivations]{Laguerre polynomials of derivations}

\begin{abstract}
We introduce a {\em grading switching} for arbitrary nonassociative
algebras of prime characteristic $p$, aimed at producing a new grading of an algebra from a given one.
We take inspiration from a fundamental tool in the classification theory of modular Lie algebras known as {\em toral
switching,} which relies on a delicate adaptation of the exponential of a
derivation.
Our grading switching is achieved by evaluating certain generalized Laguerre polynomials of degree
$p-1$, which play the role of generalized exponentials,
on a derivation of the algebra.
A crucial part of our argument is establishing a congruence
for them which is an appropriate analogue of the functional equation
$e^x\cdot e^y=e^{x+y}$ for the classical exponential.
Besides having a wider scope, our treatment provides a more transparent explanation of some
aspects of the original toral switching, which can be recovered as a special
case.
\end{abstract}
\subjclass[2000]{Primary 17A36; secondary  33C52, 17B50,  17B65}
\keywords{Nonassociative algebra; grading; derivation; Laguerre polynomial; restricted Lie algebra; toral switching}
\maketitle

\section{Introduction}

The exponential function is certainly one of the
most important mathematical functions.
The main reason, sometimes disguised in other forms, such as its differential
formulation
$(d/dx)e^x=e^x$, is that it
interconnects additive and multiplicative structures,
because of the fundamental identity $e^x\cdot e^y=e^{x+y}$.
In particular, one of the important classical applications is the local
reconstruction of a Lie group from its Lie algebra.
This Lie-theoretic use of the exponential function can be
formulated in more general terms as a device which turns derivations
of a {\em nonassociative} (in the standard meaning of {\em not necessarily associative}) algebra into automorphisms.
The basic algebraic fact is already visible in the special case of nilpotent derivations,
where convergence matters play no role:
if $D$ is a nilpotent derivation of a non associative algebra $A$ over a field of characteristic zero,
then the finite sum $\exp(D)=\sum_{i=0}^{\infty}D^i/i!$ defines an
automorphism of $A$.

This nice property breaks down over fields of positive
characteristic $p$.
The condition $D^p=0$, which seems the minimum requirement for $\exp(D)$ to make sense in
this context, does not guarantee that $\exp(D)$ is an automorphism.
In fact, only the stronger assumption $D^{(p+1)/2}=0$ does, for
$p$ odd.
In the absence of the assumption $D^p=0$ one can use the {\em truncated exponential}
$E(X)=\sum_{i=0}^{p-1}X^i/i!$ as some kind of substitute for the
exponential series, of course dropping any expectation
that evaluating it on $D$ may yield an automorphism.

In the theory of modular Lie algebras the apparent shortcoming of $\exp(D)$
not necessarily being an automorphism when it is defined is
turned into an advantage with the technique of {\em toral
switching}.
This is a fundamental tool which originated in~\cite{Win:toral},
but has undergone substantial generalizations in~\cite{BlWil:rank-two} and finally~\cite{Premet:Cartan},
where maps similar to exponentials of derivations are used to
produce a new torus from a given one.
The very fact that the map need not not be an automorphism
allows the new torus to have rather different
properties than the original one, which are more suited to classification purposes.

A crucial function of tori in modular Lie algebras
is to produce gradings, as the corresponding eigenspace
decompositions with respect to the adjoint action (a {\em (generalized) root space decomposition}).
One naturally wonders whether some kind of exponential could be used
to pass from a grading to another without reference to the grading
arising as the root space decomposition with respect to some
torus.
Besides effectively extending the applicability of the technique
from the realm of Lie algebras to the wider one of nonassociative
algebras, such {\em grading switching} does have applications
within Lie algebra theory, where not all gradings of interest
are directly related to tori.
A special instance of such grading switching was described
in~\cite{Mat:Artin-Hasse}, in terms of {\em Artin-Hasse
exponentials}.
The strong limitation of~\cite{Mat:Artin-Hasse} was that the
derivation $D$ had to be nilpotent, but that
special version was already sufficient for an
application to certain Lie algebra gradings in~\cite{AviMat:-1}.

The main goal of the present work is to describe how grading
switching, in the spirit of~\cite{Mat:Artin-Hasse},
can be done in full generality,
for arbitrary derivations of nonassociative Lie algebras
(with respect to compatible gradings).
We will show how this extends the classical toral switching in a natural way.
The role of the exponential series is taken by certain
(generalized) {\em Laguerre polynomials}, which suggest the title of this paper.

We only sketch the essence of our main result in this introduction
and refer the reader to Section~\ref{sec:general_case} for
a precise formulation.
Let $A=\bigoplus A_k$ be a nonassociative algebra over a field
$\F$ of prime characteristic $p$, graded over the integers modulo $m$,
and let $D$ be a graded derivation of $A$, whose degree $d$
satisfies $m\mid pd$.
Assume $\F$ algebraically closed
and $A$ finite-dimensional for simplicity, but much weaker assumptions
are sufficient and will be specified later.
Then we construct a linear map
$\LL_D:A\to A$ such that
$A=\bigoplus_k \LL_D(A_k)$ is a new grading over the integers modulo $m$.
In the special case where $D$ is nilpotent the map $\LL_D$
coincides with $E_p(D)$, which denotes the Artin-Hasse
exponential series evaluated on $D$ and was investigated in~\cite{Mat:Artin-Hasse}.

In Section~\ref{sec:special} we prove a special case of our
result, where the main argument is stripped of the distraction of
some additional technicalities of the general case.

Both the special case and the general case depend on a congruence
for certain Laguerre polynomials, which we prove in
Section~\ref{sec:exponential} and which might well be of interest
outside the area of nonassociative algebras.
It is a polynomial congruence analogue of the
functional equation $\exp(X)\exp(Y)=\exp(X+Y)$ for the classical
exponential.

Section~\ref{sec:Laguerre} contains a review of definitions and
known properties of Laguerre polynomials, and also a modular
property which might be new.

In the concluding Section~\ref{sec:toral_switching} we explain how our
main result specializes to the setting of toral switching in
finite-dimensional modular Lie algebras.

Our preprint~\cite{AviMat:mixed_types}
contains a result on certain modular Lie algebras
whose proof depends on a grading switching as described here.

\section{Laguerre polynomials and some of their properties}\label{sec:Laguerre}

The classical (generalized) Laguerre polynomial of degree $n \geq 0$ is defined as
\[
L_n^{(\alpha)}(X)=\sum_{k=0}^n\binom{\alpha+n}{n-k}
\frac{(-X)^k}{k!},
\]
where $\alpha$ is a parameter, usually taken in the complex numbers.
However, we may also view $L_n^{(\alpha)}(X)$ as a polynomial with rational coefficients in the two
indeterminates $\alpha$ and $X$.
It is well known and easy to check that the Laguerre polynomials satisfy the identities
\begin{align}
&L_n^{(\gamma)}(X)= L_n^{(\gamma+1)}(X)-L_{n-1}^{(\gamma+1)}(X)\label{eq:rel1},
\\
&nL_n^{(\gamma+1)}(X)= (n-X)L_{n-1}^{(\gamma+1)}(X)+(n+\gamma)L_{n-1}^{(\gamma)}(X) \label{eq:rel2}.
\end{align}
The derivative of $L_n^{(\gamma)}(X)$ with respect to $X$
equals $-L_{n-1}^{(\gamma+1)}(X)$, which according to
Equation~\eqref{eq:rel1} can be written as
\begin{equation}\label{eq:derivative}
\frac{d}{dX}
L_n^{(\gamma)}(X)
=
L_n^{(\gamma)}(X)-L_n^{(\gamma+1)}(X),
\end{equation}

Now fix a prime $p$.
We are essentially interested only in the polynomial
$L_{p-1}^{(\alpha)}(X)$.
The reason is that, viewed in characteristic $p$, it may be thought of as a generalization
of the truncated exponential
$E(X)=\sum_{k=0}^{p-1}X^k/k!$
which we mentioned in the introduction.
In fact, we have
$L_{p-1}^{(0)}(X)\equiv E(X)\pmod{p}$
because
$\binom{p-1}{k}\equiv\binom{-1}{k}=(-1)^k$
for $k\ge 0$,
and the full sense of this generalization should be conveyed by
the easily verified congruence
\begin{equation}\label{eq:Lmodp}
L_{p-1}^{(\alpha)}(X)
\equiv
(1-\alpha^{p-1})
\sum_{k=0}^{p-1}\frac{X^k}
{(\alpha+k)(\alpha+k-1)\cdots(\alpha+1)}
\pmod{p}.
\end{equation}

In this preparatory section we collect some properties of
$L_{p-1}^{(\alpha)}(X)\pmod{p}$,
starting with some easy ones.
Equation \eqref{eq:rel2} with $n=p$ yields
\[
pL_p^{(\gamma+1)}(X)=(p-X)L_{p-1}^{(\gamma+1)}(X)+(p+\gamma)L_{p-1}^{(\gamma)}(X).
\]
Because
\[
pL_p^{(\gamma+1)}(X)=p\sum_{k=0}^p\binom{\gamma+1+p}{p-k}\frac{(-X)^k}{k!}
\equiv X^p-(\gamma^p-\gamma)\pmod{p},
\]
we deduce the congruence
\begin{equation}\label{eq:pLp}
X^p-(\gamma^p-\gamma)\equiv -XL_{p-1}^{(\gamma+1)}(X)+ \gamma L_{p-1}^{(\gamma)}(X)
\pmod{p}.
\end{equation}
Equation~\eqref{eq:pLp} allows one to give
Equation~\eqref{eq:derivative} for the derivative of $L_{p-1}^{(\gamma)}(X)$
a variant in congruence form which we will use later, namely,
\begin{equation}\label{eq:L-diff}
X\cdot\frac{d}{dX}
L_{p-1}^{(\gamma)}(X)
\equiv
(X-\gamma)\cdot L_{p-1}^{(\gamma)}(X)
+
X^p-(\gamma^p-\gamma)
\pmod{p}.
\end{equation}
In the special case where $\gamma=0$ this reads
\begin{equation}\label{eq:E-diff}
XE'(X)
\equiv
XE(X)+X^p
\pmod{p}
\end{equation}
in terms of the truncated exponential $E(X)$.
Because of this analogy with the defining differential equation
$\exp'(X)=\exp(X)$
for the classical exponential,
Equation~\eqref{eq:L-diff} plays a key role in the proof of our
Proposition~\ref{prop:equation_for_L},
which, in turn, is crucial for our main result.
Note in passing that further differentiation of
Equation~\eqref{eq:E-diff} leads to
$XE''(X)+(1-X)E'(X)-E(X)\equiv 0\pmod{p}$.
This is a special case modulo $p$ of the second-order
differential equation
$XY''+(\alpha+1-X)Y'+nY=0$,
which is often used to define the Laguerre polynomials
$Y=L_n^{(\alpha)}(X)$.

Now we present a property of $L_{p-1}^{(\alpha)}(X)\pmod{p}$
which appears more hidden, and might well be of more general interest.
To avoid constant use of the `mod $p$' notation, in the remainder of the paper
the Laguerre polynomial $L_{p-1}^{(\alpha)}(X)$ will always be viewed as having
coefficients in $\F_p$, the field with $p$ elements.
The polynomial
$L_{p-1}^{(Z^p)}(Z^p-Z)$
will play a special role in the sequel.
Equation~\eqref{eq:Lmodp} shows at once that it
vanishes on $\F_p^{\ast}$,
but what we will actually need later is that it has no further
roots in in the algebraic closure
$\overline{\F_p}$ of $\F_p$.
This is a consequence of the following result.

\begin{lemma}\label{lemma:Laguerre}
We have
$L_{p-1}^{(Z^p)}(Z^p-Z)=\prod_{i=1}^{p-1}(1+Z/i)^i$
in $\F_p[Z]$.
\end{lemma}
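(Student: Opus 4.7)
The plan is to recognize both sides as polynomial solutions of the first-order linear differential equation
\[
(Z^{p-1}-1)\,Y'(Z) = Y(Z)
\]
in $\F_p[Z]$ satisfying $Y(0) = 1$, and then conclude that they must coincide. Write $f(Z) := L_{p-1}^{(Z^p)}(Z^p - Z)$ and $g(Z) := \prod_{i=1}^{p-1}(1 + Z/i)^i$.

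First I verify the differential equation for $f$ by specializing Equation~\eqref{eq:L-diff} at $\gamma = Z^p$ and $X = Z^p - Z$. Two simplifications occur in characteristic $p$: $X - \gamma = -Z$, and the ``error term'' $X^p - (\gamma^p - \gamma) = (Z^p - Z)^p - (Z^{p^2} - Z^p)$ vanishes identically. Combined with the chain rule $f'(Z) = -(\partial_X L_{p-1}^{(Z^p)})(Z^p - Z)$ (using $d(Z^p)/dZ = 0$), Equation~\eqref{eq:L-diff} reduces to $(Z^p - Z)\,f'(Z) = Z\,f(Z)$, and cancelling the common factor $Z$ yields $(Z^{p-1}-1)\,f' = f$. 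For $g$, the logarithmic derivative is $g'/g = \sum_{i=1}^{p-1} i/(Z+i)$, and this equals $1/(Z^{p-1}-1)$ by partial fractions: since $Z^{p-1}-1 = \prod_{i=1}^{p-1}(Z+i)$, the residue at $-i$ equals $1/\prod_{j \ne i}(j-i) = i$ by Wilson's theorem. Finally, $f(0) = L_{p-1}^{(0)}(0) = \binom{p-1}{p-1} = 1 = g(0)$.

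With $f$ and $g$ satisfying the same differential equation, the rational function $f/g$ has vanishing formal derivative, so $f/g \in \F_p(Z^p)$. Writing $f/g = P(Z^p)/Q(Z^p)$ in lowest terms, $Q(Z^p)$ must divide $g$ in $\F_p[Z]$; but every root of $Q(Z^p)$ has multiplicity at least $p$, while the roots of $g$ (the $-i$ for $i \in \F_p^\ast$) have multiplicities $i \le p-1$. Hence $Q = 1$, and $f = P(Z^p)\,g$ for some polynomial $P \in \F_p[T]$ with $P(0) = 1$.

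The main obstacle is to show $P = 1$, equivalently $\deg f \le \deg g = p(p-1)/2$; the naive bound $\deg f \le p(p-1)$ coming from the Laguerre definition is too weak. I would instead work with the simplified expression
\[
f(Z) = -\sum_{k=0}^{p-1}(Z^p - Z)^k\,\prod_{j=k+1}^{p-1}(Z+j)^p
\]
(obtained from Equation~\eqref{eq:Lmodp} using the Frobenius identity $Z^p + j = (Z+j)^p$ and the simplification $1 - Z^{p(p-1)} = -\prod_{j=1}^{p-1}(Z+j)^p$), and verify coefficient by coefficient that $[Z^M]f = 0$ for every $M > p(p-1)/2$. The required vanishings reduce, via the identity $\prod_{j=1}^{p-1}(1+jY) = 1 - Y^{p-1}$ in $\F_p[Y]$, to the classical congruences $\sum_{i=1}^{p-1} i^r \equiv 0 \pmod p$ for $(p-1) \nmid r$. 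Once this degree bound is established, $P = 1$ and hence $f = g$.
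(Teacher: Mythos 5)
Your first three steps are correct and take a genuinely different route from the paper: you derive the first-order equation $(Z^{p-1}-1)Y'=Y$ from Equation~\eqref{eq:L-diff} (the specialization $\gamma=Z^p$, $X=Z^p-Z$ does kill the term $X^p-(\gamma^p-\gamma)$), you verify the same equation for $g(Z)=\prod_{i=1}^{p-1}(1+Z/i)^i$ by partial fractions and Wilson's theorem, and your argument that $f/g$ lies in $\F_p[Z^p]$ (no denominator can occur because the roots of $g$ have multiplicities at most $p-1$, while roots of $Q(Z^p)$ have multiplicity at least $p$) is sound. The paper instead derives the shift identity~\eqref{eq:shift} from~\eqref{eq:pLp} and runs a recursion on root multiplicities under $\alpha\mapsto\alpha+1$; both methods determine $f$ only up to a factor $P(Z^p)$ with $P(0)=1$, so both face the same remaining problem, namely the degree bound $\deg f\le p(p-1)/2$.

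That is where your proposal has a genuine gap. The paper flags this bound as the nontrivial point and proves it with its key trick: the product $Z^p\,L_{p-1}^{(Z^p)}(Z^p-Z)\,L_{p-1}^{(-Z^p)}(-Z^p+Z)$ is invariant under $Z\mapsto Z+1$ by~\eqref{eq:shift} and has zero derivative, hence is a polynomial in $Z^{p^2}-Z^p$, which caps its degree at $p^2$. Your substitute is an unexecuted computation: every term of your expansion $-\sum_{k=0}^{p-1}(Z^p-Z)^k\prod_{j=k+1}^{p-1}(Z+j)^p$ has degree exactly $p(p-1)$, so you need the coefficient of $Z^{pc+d}$, which works out to $(-1)^d\sum_{k\ge d}\binom{k}{d}\,e_{p-1-c-d}(\{k+1,\dots,p-1\})$, to vanish for all $pc+d>p(p-1)/2$. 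Asserting that this ``reduces, via $\prod_{j=1}^{p-1}(1+jY)=1-Y^{p-1}$, to $\sum_i i^r\equiv 0\pmod p$'' is not yet a proof: these are sums of binomial coefficients against elementary symmetric functions of tail segments of $\{1,\dots,p-1\}$, and making them vanish requires an actual argument (interchanging the summations, a hockey-stick identity, degree bookkeeping for polynomial sums over $\F_p^\ast$, and a check that the one coefficient which must survive, at degree exactly $p(p-1)/2$, falls just outside the claimed range). I believe your plan can be completed along these lines, but as written the heart of the lemma is missing; alternatively you could close the gap by importing the paper's invariance argument for the degree bound into your setup.
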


This can also be stated in the equivalent
form
\[
L_{p-1}^{(Z^p)}(Z^p-Z)
=(-1)^{p(p-1)/2}\prod_{j=1}^{p-1}\binom{Z-1}{j}
\]
in $\F_p[Z]$.
In fact, the right-hand sides of the two equations are polynomials with the same
roots in
$\overline{\F_p}$, with corresponding multiplicities, and the same constant
term $1$ because
$\prod_{j=1}^{p-1}\binom{-1}{j}
=\prod_{j=1}^{p-1}(-1)^j
=(-1)^{p(p-1)/2}$.
A nontrivial consequence of Lemma~\ref{lemma:Laguerre} is the fact that
$L_{p-1}^{(Z^p)}(Z^p-Z)$ has degree $p(p-1)/2$.
Another noteworthy consequence is the identity
\begin{equation*}
L_{p-1}^{(Z^p)}(Z^p-Z)
\cdot
L_{p-1}^{(-Z^p)}(-Z^p+Z)
=
1-Z^{p(p-1)}.
\end{equation*}
in $\F_p[Z]$,
to be compared with the familiar $\exp(X)\exp(-X)=1$,
after noting that $L_{p-1}^{(Z^p)}(Z^p-Z)\equiv L_{p-1}^{(0)}(-Z)=E(-Z)\pmod{Z^p}$.

\begin{proof}[Proof of Lemma~\ref{lemma:Laguerre}]
Equation~\eqref{eq:pLp} yields
\begin{equation}\label{eq:shift}
(Z^p-Z)\cdot L_{p-1}^{(Z^p+1)}(Z^p-Z)
=Z^p\cdot L_{p-1}^{(Z^p)}(Z^p-Z)
\end{equation}
in $\F_p[Z]$.
Note that $L_{p-1}^{(0)}(0)=1$
and write
$L_{p-1}^{(Z^p)}(Z^p-Z)=\prod_{i=1}^{s}(1-Z/\alpha_i)$
in $\overline{\F_p}[Z]$.
Then Equation~\eqref{eq:shift} says that
\[
\prod_{j=1}^{p-1}(Z-j)\cdot \prod_{i=1}^{s}\bigl(Z-(\alpha_i-1)\bigr)
=Z^{p-1}\cdot \prod_{i=1}^{s}(Z-\alpha_i).
\]
We infer that if some $\alpha\in\overline{\F_p}$ is a root of
$L_{p-1}^{(Z^p)}(Z^p-Z)$ with multiplicity $m$, where we allow $m$ to be zero, then $\alpha+1$
is a root with multiplicity $m+p-1$ if $\alpha=0$, $m-1$ if $\alpha\in\F_p^\ast$,
and $m$ otherwise.
In particular, because $0$ is not a root, each element of $\F_p$
is a root of $L_{p-1}^{(Z^p)}(Z^p-Z)$ with the multiplicity claimed in Lemma~\ref{lemma:Laguerre}.

Because $L_{p-1}^{(Z^p)}(Z^p-Z)$ has constant term $1$,
in order to conclude the proof it remains to show that it has no further roots in $\overline{\F_p}$.
To prove that it suffices to show that the polynomial has degree at most
$p(p-1)/2$.
Note that direct expansion only shows us that
it has degree at most $p(p-1)$, which is twice as high as our
goal.
One way to proceed is noting that according to Equation~\eqref{eq:shift} the product
\begin{equation}\label{eq:product}
Z^p\cdot L_{p-1}^{(Z^p)}(Z^p-Z)
\cdot
L_{p-1}^{(-Z^p)}(-Z^p+Z)
\end{equation}
is invariant under the substitution $Z\mapsto Z+1$, and hence can be expressed as a polynomial in $Z^p-Z$.
However, because its derivative is zero, as we prove in the next paragraph, it can also be expressed
as a polynomial in $Z^p$.
These conditions together imply that it can be expressed as a polynomial in
$Z^{p^2}-Z^p$.
Because we know that its degree cannot exceed $2p^2-p$ we infer that
it cannot exceed $p^2$, whence $L_{p-1}^{(Z^p)}(Z^p-Z)$
has degree at most $p(p-1)/2$, as desired.

Now we prove our claim about the polynomial of Equation~\eqref{eq:product}
having zero derivative.
According to Equation~\eqref{eq:derivative} we have
\[
\frac{d}{dZ}L_{p-1}^{(Z^p)}(Z^p-Z)=-L_{p-1}^{(Z^p)}(Z^p-Z)+L_{p-1}^{(Z^p+1)}(Z^p-Z),
\]
and hence the derivative of that polynomial
equals the product of $Z^p$ and
\begin{align*}
\frac{d}{dZ}
&\bigl(L_{p-1}^{(Z^p)}(Z^p-Z)
\cdot
L_{p-1}^{(-Z^p)}(-Z^p+Z)\bigr)
\\&=
L_{p-1}^{(Z^p+1)}(Z^p-Z)\cdot L_{p-1}^{(-Z^p)}(-Z^p+Z)
-
L_{p-1}^{(Z^p)}(Z^p-Z)\cdot L_{p-1}^{(-Z^p+1)}(-Z^p+Z)
\\&=
\left(\frac{Z^p}{Z^p-Z}-\frac{(-Z)^p}{(-Z)^p-(-Z)}\right)\cdot
L_{p-1}^{(Z^p)}(Z^p-Z)\cdot L_{p-1}^{(-Z^p)}(-Z^p+Z)
=0,
\end{align*}
where in the last step we have used Equation~\eqref{eq:shift}
twice, with $-Z$ in place of $Z$ in the latter case.
\end{proof}

\section{An exponential-like property of $L_{p-1}^{(\alpha)}(X)$}\label{sec:exponential}

In this section we use the differential equation modulo $p$ for $L_{p-1}^{(\alpha)}(X)$
which we stated in Equation~\eqref{eq:L-diff} to prove a congruence similar to the
functional equation $\exp(X)\exp(Y)=\exp(X+Y)$ satisfied by the classical
exponential.

\begin{prop}\label{prop:equation_for_L}
Consider the subring
$R=\F_p\bigl[\alpha,\beta,\bigl((\alpha+\beta)^{p-1}-1\bigr)^{-1}\bigr]$ of the ring
$\F_p(\alpha,\beta)$ of rational expressions in the indeterminates $\alpha$ and $\beta$,
and let $X$ and $Y$ be further indeterminates.
There exist rational expressions $c_i(\alpha,\beta)\in R$, such that
\[
L_{p-1}^{(\alpha)}(X)
L_{p-1}^{(\beta)}(Y)
\equiv
L_{p-1}^{(\alpha+\beta)}(X+Y)
\Bigl(
c_0(\alpha,\beta)+\sum_{i=1}^{p-1}c_i(\alpha,\beta)X^iY^{p-i}
\Bigr)
\]
in $R[X,Y]$, modulo the ideal generated by
$X^p-(\alpha^p-\alpha)$
and
$Y^p-(\beta^p-\beta)$.
\end{prop}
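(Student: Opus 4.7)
My plan is to mimic the classical proof that $\exp(X)\exp(Y)=\exp(X+Y)$ via the differential equation satisfied by $\exp$, taking Equation~\eqref{eq:L-diff} as the modular analogue. Concretely I work in the quotient
\[
A := R[X,Y]\big/\bigl(X^p-(\alpha^p-\alpha),\; Y^p-(\beta^p-\beta)\bigr),
\]
which is $R$-free of rank $p^2$ with basis $\{X^iY^j\}_{0\le i,j<p}$, and seek to express the congruence as an equality $F=GH$ in $A$, where $F:=L_{p-1}^{(\alpha)}(X)\,L_{p-1}^{(\beta)}(Y)$, $c:=\alpha+\beta$, $G:=L_{p-1}^{(c)}(X+Y)$, and $H$ is of the prescribed shape.

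The operators $X\partial_X$ and $Y\partial_Y$ both kill the defining relations of $A$ in characteristic $p$, so the Euler operator $\Delta:=X\partial_X+Y\partial_Y$ descends to a derivation of $A$. Applying Equation~\eqref{eq:L-diff} to each factor of $F$ yields $X\partial_X F=(X-\alpha)F$ and $Y\partial_Y F=(Y-\beta)F$ in $A$; using $(X+Y)^p=c^p-c$ in $A$ together with Equation~\eqref{eq:L-diff} taken with $\gamma=c$ and $W=X+Y$, one gets $(X+Y)L_{p-1}^{(c)\prime}(X+Y)=(X+Y-c)G$. Summing, $\Delta F=(X+Y-c)F$ and $\Delta G=(X+Y-c)G$, so once $G$ is known to be a unit in $A$ the Leibniz rule forces $\Delta(FG^{-1})=0$.

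The crux of the argument is therefore the invertibility of $G$, for which I invoke Lemma~\ref{lemma:Laguerre}. Setting $z:=c-(X+Y)\in A$, one has $z^p=c$, and the elements $1,X+Y,\dots,(X+Y)^{p-1}$ are $R$-linearly independent in $A$; hence the $R$-algebra map $R[T]/(T^p-c)\to A$ sending $T\mapsto z$ is injective with image $R[X+Y]$. Under it $G$ corresponds to $L_{p-1}^{(c)}(c-T)=L_{p-1}^{(T^p)}(T^p-T)$, which equals $\prod_{i=1}^{p-1}(1+T/i)^i$ by Lemma~\ref{lemma:Laguerre}. The substitution $\alpha\mapsto T^p-\beta$ identifies $R[T]/(T^p-c)$ with $\F_p[\beta,T,(T^{p-1}-1)^{-1}]$ (using $(T^p)^{p-1}-1=(T^{p-1}-1)^p$ in characteristic $p$), and in the latter each factor $1+T/i=(T-(-i))/i$ is a unit because $T-(-i)$ divides the invertible polynomial $T^{p-1}-1=\prod_{j=1}^{p-1}(T-j)$. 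Hence $G$ is a unit in $A$.

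With $H:=FG^{-1}\in A$ now defined and satisfying $\Delta H=0$, I expand $H=\sum_{0\le i,j<p}r_{ij}X^iY^j$ uniquely with $r_{ij}\in R$. Since $\Delta(X^iY^j)=(i+j)X^iY^j$, the equation $\Delta H=0$ forces $(i+j)r_{ij}=0$ in $R$, hence $r_{ij}=0$ whenever $p\nmid i+j$; because $0\le i+j\le 2p-2$ only the indices $(0,0)$ and $(i,p-i)$ for $1\le i\le p-1$ remain, giving the claimed form. The main obstacle I anticipate is the invertibility step: the specific denominator $(c^{p-1}-1)^{-1}$ built into $R$ has to emerge naturally from the factorization of $G$, and this is precisely what Lemma~\ref{lemma:Laguerre} provides.
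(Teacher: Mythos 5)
Your proof is correct and follows essentially the same route as the paper's: invertibility of $L_{p-1}^{(\alpha+\beta)}(X+Y)$ modulo the ideal is deduced from Lemma~\ref{lemma:Laguerre}, and the congruence~\eqref{eq:L-diff} is then used (with $X$, $Y$, and $X+Y$) to show that the quotient has only terms of total degree divisible by $p$, with coefficients genuinely in $R$ because the quotient ring is $R$-free on the monomials $X^iY^j$. Your Euler operator $X\partial_X+Y\partial_Y$ is just a direct repackaging of the paper's auxiliary substitution $X\mapsto TX$, $Y\mapsto TY$ followed by $d/dT$, and your unit argument via the embedding of $R[T]/(T^p-\alpha-\beta)$ is a (slightly longer but valid) variant of the paper's computation of $\bigl(L_{p-1}^{(\alpha+\beta)}(X+Y)\bigr)^p$ by Frobenius.
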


The crucial point for our applications of
Proposition~\ref{prop:equation_for_L} is that the polynomial
$c_0(\alpha,\beta)+\sum_{i=1}^{p-1}c_i(\alpha,\beta)X^iY^{p-i}$
has only terms of total degree a multiple of $p$.
A simplified but weaker form of Proposition~\ref{prop:equation_for_L}
is that the stated congruence holds in $\F_p(\alpha,\beta)[X,Y]$, modulo the ideal
generated by the stated elements.
This weaker statement would suffice for the proof of
Theorem~\ref{thm:special}, but not for that of
Theorem~\ref{thm:general}.

\begin{proof}
Let $\mathcal{I}$ denote the ideal of the polynomial ring $R[X,Y]$ generated by
$X^p-(\alpha^p-\alpha)$
and
$Y^p-(\beta^p-\beta)$.
According to Lemma~\ref{lemma:Laguerre} we have
\begin{equation}\label{eq:L_as_product}
\begin{aligned}
\bigl(L_{p-1}^{(\alpha+\beta)}(X+Y)\bigr)^p
&=
L_{p-1}^{((\alpha+\beta)^p)}((X+Y)^p)
\\&\equiv
L_{p-1}^{((\alpha+\beta)^p)}((\alpha+\beta)^p-(\alpha+\beta))
\pmod{\mathcal{I}}
\\&=
\prod_{i=1}^{p-1}\Bigl(1+\frac{\alpha+\beta}{i}\Bigr)^i,
\end{aligned}
\end{equation}
a nonzero element of $R$.
In particular, the image of $L_{p-1}^{(\alpha+\beta)}(X+Y)$
in the quotient ring $R[X,Y]/\mathcal{I}$ is invertible.
Reading congruences as equalities in the corresponding quotient
ring, we have
\begin{equation}\label{eq:L-quotient}
\frac{
L_{p-1}^{(\alpha)}(X)
L_{p-1}^{(\beta)}(Y)
}{
L_{p-1}^{(\alpha+\beta)}(X+Y)
}
\equiv
\sum_{i,j=0}^{p-1}c'_{ij}(\alpha,\beta)X^iY^j
\pmod{\mathcal{I}},
\end{equation}
for certain (uniquely determined)
$c'_{ij}(\alpha,\beta)\in R$.
Our goal is proving that
$c'_{ij}(\alpha,\beta)$ vanishes when $p$ does not divide $i+j$.

Following~\cite{Mat:exponential} we introduce a further indeterminate $T$ and consider
the polynomial ring $R[X,Y,T]$
and its ideal $\mathcal{I}_T$ generated by
$(TX)^p-(\alpha^p-\alpha)$
and
$(TY)^p-(\beta^p-\beta)$.
The epimorphism $R[X,Y,T]=R[X,Y][T]$ onto $R[X,Y]$
given by evaluation at $T=1$
maps $\mathcal{I}_T$ onto $\mathcal{I}$, and hence induces an
epimorphism of $R[X,Y,T]/\mathcal{I}_T$ onto
$R[X,Y]/\mathcal{I}$.
Substituting $TX$ for $X$ and $TY$ for $Y$ in
Equation~\eqref{eq:L-quotient} yields
\begin{equation*}
\frac{
L_{p-1}^{(\alpha)}(TX)
L_{p-1}^{(\beta)}(TY)
}{
L_{p-1}^{(\alpha+\beta)}(TX+TY)
}
\equiv
\sum_{i,j=0}^{p-1}c'_{ij}(\alpha,\beta)T^{i+j}X^iY^j
\pmod{\mathcal{I}_T}.
\end{equation*}
Because the differential operator $d/dT$ on
$R[X,Y,T]$, with kernel $R[X,Y,T^p]$, maps the ideal $\mathcal{I}_T$ into itself, it induces a derivation of the quotient ring
$R[X,Y,T]/\mathcal{I}_T$.
Hence proving that
$c'_{ij}(\alpha,\beta)$ vanishes when $p$ does not divide $i+j$
is equivalent to proving that
\begin{equation}\label{eq:T}
\frac{d}{dT}\frac{
L_{p-1}^{(\alpha)}(TX)
L_{p-1}^{(\beta)}(TY)
}{
L_{p-1}^{(\alpha+\beta)}(TX+TY)
}\equiv 0
\pmod{\mathcal{I}_T}
\end{equation}
in $R[X,Y,T]$.
After expanding via Leibnitz's rule and evaluating at $T=1$
(which can be reversed by substituting $TX$ for $X$ and $TY$ for $Y$)
we see that Equation~\eqref{eq:T} is equivalent to
\begin{multline}\label{eq:log_der}
X
L_{p-1}^{(\alpha)}(X)'\cdot L_{p-1}^{(\beta)}(Y)
+
L_{p-1}^{(\alpha)}(X)\cdot Y L_{p-1}^{(\beta)}(Y)'
\\
\equiv
(X+Y)\frac{
L_{p-1}^{(\alpha+\beta)}(X+Y)'
}
{
L_{p-1}^{(\alpha+\beta)}(X+Y)
}
\cdot
L_{p-1}^{(\alpha)}(X)\cdot L_{p-1}^{(\beta)}(Y)
\pmod{\mathcal{I}}
\end{multline}
where we have used the shorthand $L_n^{(\gamma)}(Z)'=(d/dZ)L_n^{(\gamma)}(Z)$.
According to Equation~\eqref{eq:L-diff} we have
\[
Z
L_{p-1}^{(\gamma)}(Z)'
=
(Z-\gamma)L_{p-1}^{(\gamma)}(Z)
+
Z^p-(\gamma^p-\gamma).
\]
Taking, in turn, $Z=X$, $Z=Y$, and $Z=X+Y$, shows that Equation~\eqref{eq:log_der} holds.
In conclusion, we have proved that
\begin{equation}\label{eq:L-series''}
\frac{
L_{p-1}^{(\alpha)}(X)
L_{p-1}^{(\beta)}(Y)
}{
L_{p-1}^{(\alpha+\beta)}(X+Y)
}
\equiv
c_0(\alpha,\beta)+\sum_{i=1}^{p-1}c_i(\alpha,\beta)X^iY^{p-i}
\pmod{\mathcal{I}}
\end{equation}
with $c_0(\alpha,\beta):=c'_{0,0}(\alpha,\beta)$, and $c_i(\alpha,\beta):=c'_{i,p-i}(\alpha,\beta)$ for $0<i<p$.
\end{proof}

The specialization of Proposition~\ref{prop:equation_for_L} to $\alpha=\beta=0$,
where
$L_{p-1}^{(0)}(X)$
equals the {\em truncated exponential}
$E(X)=\sum_{i=0}^{p-1}X^i/i!$,
takes the more precise form
\[
E(X)\cdot E(Y)\equiv
E(X+Y)
\Bigl(1+\sum_{i=1}^{p-1}(-1)^iX^iY^{p-i}/i\Bigr)
\]
in $\F_p[X,Y]$, modulo the ideal generated by $X^p$ and $Y^p$,
see~\cite[Lemma~2.1]{Mat:Artin-Hasse}.
This can be viewed as a truncated version of a corresponding
property of (the reduction modulo $p$ of) the
Artin-Hasse exponential series, which is defined as
\[
E_p(X):=
\exp\Bigl(\sum_{i=0}^{\infty}X^{p^i}/p^i\Bigr)=
\prod_{i=0}^{\infty}\exp(X^{p^i}/p^i).
\]
In fact, as shown in the proof
of~\cite[Theorem~2.2]{Mat:Artin-Hasse},
we have
\[
E_p(X)\cdot E_p(Y)=
E_p(X+Y)\Bigl(1+\sum_{i,j=1}^{\infty}a_{ij}X^iY^j\Bigr)
\]
in $\F_p[[X,Y]]$, for certain coefficients $a_{ij}\in\F_p$
which vanish unless $p\mid i+j$.
It was proved in~\cite{Mat:exponential} that this property
essentially characterizes the reduction modulo $p$ of the
Artin-Hasse series, up to some natural variations.

\section{A model special case}\label{sec:special}

In order to avoid that too many technical details may obscure our main argument, we first present
an application of Proposition~\ref{prop:equation_for_L} to a special situation,
and postpone consideration of a more general setting to the next section.

\begin{theorem}\label{thm:special}
Let $A=\bigoplus_k A_k$ be a nonassociative algebra over the field $\F$ of characteristic $p>0$,
graded over the integers modulo $m$.
Suppose that $A$ has a graded derivation $D$ of degree $d$
such that  $D^{p^2}=D^p$, with $m\mid pd$.
Suppose that $\F$ contains the field of $p^p$ elements, and choose $\gamma \in \F$ with
$\gamma^p-\gamma=1$.
Let $A=\bigoplus_{a\in\F_p}A^{(a)}$ be the decomposition of $A$
into a direct sum of generalized eigenspaces for $D$,
and let $\LL_D:A\to A$ be the linear map whose restriction to
$A^{(a)}$coincides with $L_{p-1}^{(a\gamma)}(D)$.
Then
$A=\bigoplus_k\LL_D(A_k)$ is also a grading of $A$ over the integers modulo $m$.
\end{theorem}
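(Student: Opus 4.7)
The plan has three stages: first refine the given $\Z/m\Z$-grading by the spectral decomposition of $D^p$, then check that $\LL_D$ is bijective, and finally translate Proposition~\ref{prop:equation_for_L} into an operator identity on $A\otimes A$ to obtain multiplicativity. The main obstacle will be the last stage.

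For the first stage, I would use Leibniz together with $\binom{p}{k}\equiv 0\pmod p$ for $0<k<p$ to note that $D^p$ is again a derivation; the assumption $D^{p^2}=D^p$ then forces $D^p$ to annihilate $T^p-T=\prod_{a\in\F_p}(T-a)$, so $A=\bigoplus_{a\in\F_p}A^{(a)}$ with $A^{(a)}=\ker(D^p-a)$. In characteristic $p$ one has $(D-a)^p=D^p-a^p=D^p-a$, hence $D-a$ is nilpotent on $A^{(a)}$ and $A^{(a)}$ is the generalized $a$-eigenspace of $D$. Because $D^p$ has grading degree $pd\equiv 0\pmod m$, it preserves each $A_k$, yielding the refinement $A_k=\bigoplus_{a\in\F_p}A_k^{(a)}$ with $A_k^{(a)}=A_k\cap A^{(a)}$.

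For invertibility, on each $A^{(a)}$ I will compute the $p$-th power of $L_{p-1}^{(a\gamma)}(D)$. Because Frobenius is a ring endomorphism in characteristic $p$ and the Laguerre coefficients lie in $\F_p[\gamma]$ (with $\F_p$ fixed by Frobenius), this power equals $L_{p-1}^{((a\gamma)^p)}(D^p)$. The hypothesis $\gamma^p-\gamma=1$ and $a^p=a$ give $(a\gamma)^p-a\gamma=a$, which is precisely the value of $D^p$ on $A^{(a)}$, so by Lemma~\ref{lemma:Laguerre} at $Z=a\gamma$ the $p$-th power equals $\prod_{i=1}^{p-1}(1+a\gamma/i)^i$. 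This is nonzero because $a\gamma=-i$ would force $\gamma\in\F_p$. Hence $\LL_D$ is invertible on every $A^{(a)}$, so $A=\bigoplus_k\LL_D(A_k)$ is automatically a direct sum.

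For multiplicativity, pick $x\in A_k^{(a)}$ and $y\in A_l^{(b)}$; since $D^p$ is a derivation, $xy\in A_{k+l}^{(a+b)}$. Iterated Leibniz yields, for any bivariate polynomial $Q(X,Y)=\sum q_{r,s}X^rY^s$, the identity $\mu\bigl(Q(D\otimes 1,\,1\otimes D)(x\otimes y)\bigr)=\sum q_{r,s}\,D^r(x)\,D^s(y)$, and the special case $Q(X,Y)=R(X+Y)$ collapses to $R(D)(xy)$. I would then apply Proposition~\ref{prop:equation_for_L} with $\alpha=a\gamma,\,\beta=b\gamma$: on $A^{(a)}\otimes A^{(b)}$ the substitutions $D\otimes 1$ and $1\otimes D$ satisfy exactly $X^p-a=0$ and $Y^p-b=0$, which are the ideal generators appearing in the proposition. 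Evaluating the resulting operator identity on $x\otimes y$ and composing with $\mu$ gives
\[
\LL_D(x)\cdot\LL_D(y)=L_{p-1}^{((a+b)\gamma)}(D)\Bigl(c_0\,xy+\sum_{i=1}^{p-1}c_i\,D^i(x)\,D^{p-i}(y)\Bigr),
\]
with $c_i=c_i(a\gamma,b\gamma)$. The decisive feature is that every monomial $X^iY^{p-i}$ in the correction factor has total degree exactly $p$, so each cross-term $D^i(x)D^{p-i}(y)$ has grading $k+l+pd\equiv k+l\pmod m$ and eigenvalue $a+b$; the parenthesised expression therefore lies in $A_{k+l}^{(a+b)}$, and applying $L_{p-1}^{((a+b)\gamma)}(D)=\LL_D|_{A^{(a+b)}}$ produces an element of $\LL_D(A_{k+l}^{(a+b)})\subseteq\LL_D(A_{k+l})$. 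Summing over $a,b$ finishes the proof.
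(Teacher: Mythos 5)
Your proposal is correct and follows essentially the same route as the paper's proof: decompose each $A_k$ by the eigenspaces of the derivation $D^p$, get bijectivity of $\LL_D$ from the Frobenius computation $\bigl(L_{p-1}^{(a\gamma)}(D)\bigr)^p=L_{p-1}^{((a\gamma)^p)}(D^p)$ together with Lemma~\ref{lemma:Laguerre}, and then evaluate Proposition~\ref{prop:equation_for_L} on $D\otimes\Id$ and $\Id\otimes D$ restricted to $A^{(a)}\otimes A^{(b)}$, using that the correction terms have total degree $p$ and that $m\mid pd$. The only point you leave implicit is that the coefficients $c_i(a\gamma,b\gamma)$ are actually defined, i.e.\ that $\bigl((a+b)\gamma\bigr)^{p-1}\neq 1$; this follows from the same observation you already used for invertibility, namely that $(a+b)\gamma\in\F_p^{\ast}$ would force $\gamma\in\F_p$, contradicting $\gamma^p-\gamma=1$.
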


\begin{proof}
In this special case the eigenvalues of $D^p$ are elements of the prime field $\F_p$,
hence of the form $\alpha^p-\alpha$, with $\alpha=a\gamma$ for
some $a\in\F_p$.

The linear map $\LL_D$ is bijective.
In fact, $(\LL_D)^p$ acts
on the eigenspace $A^{(a)}$ of $D^p$
as multiplication by the scalar
\[
\bigl(L_{p-1}^{(a\gamma)}(a)\bigr)^p
=
L_{p-1}^{((a\gamma)^p)}\bigl((a\gamma)^p-a\gamma\bigr),
\]
which is nonzero according to Lemma~\ref{lemma:Laguerre}.
Hence we have the direct sum decomposition $A=\bigoplus_k\LL_D(A_k)$.

In order to prove that this is a grading
we need to  prepare the ground for an application of Proposition~\ref{prop:equation_for_L},
in a similar way as was done in the proof of~\cite[Theorem~2.2]{Mat:Artin-Hasse} in case of the Artin-Hasse exponential.
If $m: A\otimes A \rightarrow A$ denotes the map given
by the multiplication in $A$,
the fact that $D$ is a derivation means that
$D(m(x \otimes y))=m(Dx \otimes y)+m(x \otimes Dy)$ for any $x,y \in A$.
This property can be more concisely written as
$D\circ m=m \circ(D\otimes\Id+\Id\otimes D)$, where $\Id:A \rightarrow A$ is the
identity map.
In particular, we have
$L_{p-1}^{(\theta)}(D)\circ m=m \circ L_{p-1}^{(\theta)}(D\otimes\Id+\Id\otimes D)$
for any $\theta\in\F$.
The multiplication map $m$ restricts to a map
$A^{(a)}\otimes A^{(b)}\rightarrow A^{(a+b)}$,
for any $a,b\in\F_p$,
and all the components involved are invariant under $D$.
Viewing the commuting linear operators $D\otimes\Id$ and $\Id\otimes D$ as restricted to
$A^{(a)}\otimes A^{(b)}$, the congruence of Proposition~\ref{prop:equation_for_L}
can be evaluated on $D\otimes\Id$ and $\Id\otimes D$ for $X$ and $Y$,
with $a\gamma$ and $b\gamma$ for $\alpha$ and $\beta$.
This is because $D^p$ acts as multiplication by
$a^p=a=(a\gamma)^p-a\gamma$ on $A^{(a)}$, and similarly for
$A^{(b)}$.
Also note that the rational expressions $c_i(\alpha,\beta)$ can be
evaluated on $a\gamma$ and $b\gamma$ because
$(a+b)\gamma\not\in\F_p^\ast$, which is equivalent to
$\bigl((a+b)\gamma\bigr)^{p-1}\not=1$.
The result of this evaluation, followed by composition with the multiplication map $m$, is that the restriction of
$m\circ
\bigl(L_{p-1}^{(a\gamma)}(D)
\otimes
L_{p-1}^{(b\gamma)}(D)\bigr)$
to
$A^{(a)}\otimes A^{(b)}$
coincides with the restriction of
\[
L_{p-1}^{(a\gamma+b\gamma)}(D)
\circ m\circ
\Bigl(
c_0(a\gamma,b\gamma)+\sum_{i=1}^{p-1}c_i(a\gamma,b\gamma)(D^i\otimes D^{p-i})
\Bigr).
\]
This means that
\[
L_{p-1}^{(a\gamma)}(D)x \cdot L_{p-1}^{(b\gamma)}(D)y
=
L_{p-1}^{(a\gamma+b\gamma)}(D)
\Bigl(c_0(a\gamma,b\gamma)xy+\sum_{i=1}^{p-1}c_i(a\gamma,b\gamma)D^ix\cdot D^{p-i}y\Bigr)
\]
for $x\in A^{(a)}$ and $y\in A^{(b)}$, which we can also write as
\begin{equation}\label{eq:L_D}
\LL_Dx \cdot \LL_Dy
=
\LL_D
\Bigl(c_0(a\gamma,b\gamma)xy+\sum_{i=1}^{p-1}c_i(a\gamma,b\gamma)D^ix\cdot
D^{p-i}y\Bigr).
\end{equation}

Because $D^p$ is a graded derivation of degree zero, it maps each
component $A_k$ of the grading into itself, and hence
$A_k=\bigoplus_{a\in\F_p}A_k\cap A^{(a)}$.
Because $m\mid pd$, the term $D^ix\cdot D^{p-i}y$ in
Equation~\eqref{eq:L_D}, for $x\in A_k\cap A^{(a)}$
and $y\in A_\ell\cap A^{(b)}$,
belongs to $A_{k+\ell}$ as well as the term $xy$.
Hence Equation~\eqref{eq:L_D} implies that
$\LL_Dx\cdot \LL_Dy\in \LL_D\bigl(A_{k+\ell}\cap A^{(a+b)}\bigr)$.
In particular, we conclude that
$\LL_D A_k\cdot \LL_D A_\ell\subseteq \LL_D A_{k+\ell}$,
and so
$A=\bigoplus_k \LL_D(A_k)$
is a grading of $A$ over the integers modulo $m$.
\end{proof}

\section{The general case}\label{sec:general_case}

In this section we prove our main result, which extends Theorem~\ref{thm:special} to the general case
where $D$ is a derivation of a nonassociative algebra $A$ over a field
$\F$ of characteristic $p$, which we assume as large as we need in this paragraph,
under the sole assumption on $D$
that $D^{p^r}$ is semisimple with finitely many
eigenvalues, for some $r$.
In fact, in that case $D$ satisfies an equation
\begin{equation}\label{eq:D}
D^{p^n}+a_{n-1}D^{p^{n-1}}+ \cdots +a_{r}D^{p^{r}}=0,
\end{equation}
with $a_{r}\neq 0$.
It is then not hard to see, as in~\cite[Section 1.5]{Strade:book}, or see
our Remark~\ref{rem:g} below, that there is a $p$-polynomial
$g(t)=\sum_{i=r}^{n-1}b_{i}T^{p^i}$
such that $g(D)^p-g(D)=D^{p^r}$.

\begin{theorem}\label{thm:general}
Let $A=\bigoplus A_k$ be a nonassociative algebra over the perfect field
$\F$ of prime characteristic $p$, graded over the integers modulo $m$.
Suppose that $A$ has a graded derivation $D$ of degree $d$ with
$m\mid pd$, such that $D^{p^r}$ is diagonalizable over $\F$.
Suppose that there exists a $p$-polynomial $g(T)\in \F[T]$ such that
$g(D)^p-g(D)=D^{p^r}$.
Set $h(T)=\sum_{i=1}^{r-1}T^{p^{i}}\in\F_p[T]$.

Let $A=\bigoplus_{\rho\in\F}A^{(\rho)}$ be the decomposition of $A$
into a direct sum of generalized eigenspaces for $D$
(with $A^{(\rho)}$ corresponding to the eigenvalue $\rho$).
Let $\LL_D:A\to A$ be the linear map whose restriction to
$A^{(\rho)}$ coincides with $L_{p-1}^{(g(\rho)-h(D))}(D)$.
Then
$A=\bigoplus_k \LL_D(A_k)$ is also a grading of $A$ over the integers modulo $m$.
\end{theorem}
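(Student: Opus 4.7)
The plan is to parallel the argument of Theorem~\ref{thm:special}, replacing the scalar $a\gamma$ used there (whose defining property was $(a\gamma)^p-a\gamma=D^p$ on $A^{(a)}$) by the operator $\alpha:=g(\rho)-h(D)$ acting on the generalized eigenspace $A^{(\rho)}$. The three tasks are then: (i) verify the analogous identity $\alpha^p-\alpha=D^p$ on $A^{(\rho)}$, which makes Proposition~\ref{prop:equation_for_L} applicable modulo its ideal; (ii) deduce bijectivity of $\LL_D$ from a computation using Lemma~\ref{lemma:Laguerre}; and (iii) apply Proposition~\ref{prop:equation_for_L} on tensor products of eigenspaces to conclude $\LL_D(A_k)\cdot\LL_D(A_\ell)\subseteq\LL_D(A_{k+\ell})$.

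For (i), on $A^{(\rho)}$ the operator $D^{p^r}$ is diagonalizable with unique eigenvalue $\rho^{p^r}$, hence acts as the scalar $\rho^{p^r}$, while $g(D)=g(\rho)+N$ with $N$ locally nilpotent. Expanding $g(D)^p-g(D)=\rho^{p^r}$ and equating scalar and locally nilpotent parts yields $g(\rho)^p-g(\rho)=\rho^{p^r}$. A direct calculation with $h(T)=\sum_{i=1}^{r-1}T^{p^i}$ (using Frobenius additivity) gives $h(D)^p-h(D)=D^{p^r}-D^p$, so
\[
\alpha^p-\alpha=\bigl(g(\rho)^p-g(\rho)\bigr)-\bigl(h(D)^p-h(D)\bigr)=\rho^{p^r}-(D^{p^r}-D^p)=D^p
\]
on $A^{(\rho)}$, as required.

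For (ii), since $\alpha$ commutes with $D$, one has $(\LL_D)^p\bigr|_{A^{(\rho)}}=L_{p-1}^{(\alpha^p)}(D^p)=L_{p-1}^{(\alpha^p)}(\alpha^p-\alpha)$, which by Lemma~\ref{lemma:Laguerre} (applied with $Z=\alpha$) equals $\prod_{i=1}^{p-1}(1+\alpha/i)^i$. Invertibility reduces to showing the unique eigenvalue $f(\rho):=g(\rho)-h(\rho)$ of $\alpha$ on $A^{(\rho)}$ lies outside $\F_p^\ast$. The scalar analogue of (i) gives $f(\rho)^p-f(\rho)=\rho^p$: if $\rho=0$ then $f(\rho)=0$, and if $\rho\neq 0$ then $f(\rho)\notin\F_p$ at all; either way $f(\rho)\notin\F_p^\ast$, and each factor $1+\alpha/i$ is invertible on $A^{(\rho)}$.

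Finally, for (iii), I would apply Proposition~\ref{prop:equation_for_L} on $A^{(\rho)}\otimes A^{(\rho')}$, substituting $D\otimes\Id$, $\Id\otimes D$, $g(\rho)-h(D\otimes\Id)$, and $g(\rho')-h(\Id\otimes D)$ for $X,Y,\alpha,\beta$ respectively. By (i) the ideal relations hold identically. Additivity of the $p$-polynomials $g$ and $h$ on the commuting summands of $D\otimes\Id+\Id\otimes D$ gives $\alpha+\beta=g(\rho+\rho')-h(D\otimes\Id+\Id\otimes D)$, acting on $A^{(\rho)}\otimes A^{(\rho')}$ with unique eigenvalue $f(\rho+\rho')$, which by (ii) lies outside $\F_p^\ast$, so $(\alpha+\beta)^{p-1}-1$ is invertible and the coefficients $c_i(\alpha,\beta)\in R$ can be evaluated on these operators. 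This is the main obstacle of the proof and the point where the stronger form of Proposition~\ref{prop:equation_for_L}, with the $c_i$ genuinely in $R$ rather than merely rational functions, becomes essential: $\alpha,\beta$ are operators, not scalars, so denominators outside $(\alpha+\beta)^{p-1}-1$ cannot be tolerated. Composing with the multiplication $m:A\otimes A\to A$ then produces an identity analogous to~\eqref{eq:L_D} expressing $\LL_D x\cdot\LL_D y$, for $x\in A_k\cap A^{(\rho)}$ and $y\in A_\ell\cap A^{(\rho')}$, as $\LL_D$ applied to a sum of terms $c_i(\alpha,\beta)\,D^ix\cdot D^{p-i}y$ of total $D$-degree $0$ or $p$. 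Since $m\mid pd$, each such term lies in $A_{k+\ell}$, which yields $\LL_D(A_k)\cdot\LL_D(A_\ell)\subseteq\LL_D(A_{k+\ell})$ and completes the proof.
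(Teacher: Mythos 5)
Your proposal is correct and follows essentially the same route as the paper's proof: the same substitution of $D\otimes\Id$, $\Id\otimes D$, $g(\rho)-h(D\otimes\Id)$, $g(\sigma)-h(\Id\otimes D)$ into Proposition~\ref{prop:equation_for_L}, the same verification that the ideal generators evaluate to zero (via $g(\rho)^p-g(\rho)=\rho^{p^r}$ and $h(D)^p-h(D)=D^{p^r}-D^p$), the same invertibility check for $(\alpha+\beta)^{p-1}-1$ (your formulation via the eigenvalue $f(\rho+\rho')\notin\F_p^\ast$ is equivalent to the paper's two-case argument using the scalar $(\rho+\sigma)^{p^{r+1}}$; note only that $\rho+\rho'$ need not itself be an eigenvalue of $D$, but the identity $g(\rho+\rho')^p-g(\rho+\rho')=(\rho+\rho')^{p^r}$ still holds by additivity of $g$ from the identities at $\rho$ and $\rho'$, so your computation stands), and the same appeal to Lemma~\ref{lemma:Laguerre} for bijectivity (you compute $(\LL_D)^p$ with the operator argument $\alpha$, the paper computes $(\LL_D)^{p^r}$ acting as an explicit nonzero scalar --- a cosmetic difference, and your justification that the relevant value avoids $\F_p^\ast$ is in fact more explicit than the paper's). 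The one place where you are too quick is the very last step: since the coefficients $c_i(\alpha,\beta)$ are now operators rather than scalars as in Theorem~\ref{thm:special}, the assertion ``since $m\mid pd$, each such term lies in $A_{k+\ell}$'' also requires observing that these coefficients stabilize the homogeneous components; this holds because $\alpha$ and $\beta$ are scalars plus polynomials in $D^p$, a graded derivation of degree $pd\equiv 0\pmod{m}$, and the inverted factor $(\alpha+\beta)^{p-1}-1$ is a nonzero scalar plus a locally nilpotent map of the same kind, so its inverse preserves the components as well --- the paper makes exactly this observation explicitly at the end of its proof, and your write-up should too.
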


\begin{proof}
We adapt the proof of Theorem~\ref{thm:special} to the present more
general setting.
Note that $h(T^p-T)=h(T)^p-h(T)=T^{p^r}-T^p$,
and that in the special case of Theorem~\ref{thm:special} we had
$h(T)=0$ and $g(T)=\gamma T^p$, where $\gamma^p-\gamma-1=0$.

The linear map $\LL_D$ is bijective.
In fact,
because $g(\rho)^p-g(\rho)=\rho^{p^r}$ for any eigenvalue of $D$,
on the generalized eigenspace $A^{(\rho)}$ of $D$
the linear map
$\bigl(g(\rho)-h(D)\bigr)^{p^r}
=g(\rho)^{p^r}-h(D^{p^r})$
acts as multiplication by the scalar
\[
g(\rho)^{p^r}-h(\rho^{p^r})
=g(\rho)^{p^r}-h\bigl(g(\rho)^p-g(\rho)\bigr)
=g(\rho)^p,
\]
and hence $(\LL_D)^{p^r}$ acts on $A^{(\rho)}$
as multiplication by the scalar
\[
\bigl(L_{p-1}^{(g(\rho)-h(\rho))}(\rho)\bigr)^{p^r}
=
L_{p-1}^{(g(\rho)^{p^r}-h(\rho^{p^r}))}(\rho^{p^r})
=
L_{p-1}^{(g(\rho)^p)}\bigl(g(\rho)^p-g(\rho)\bigr),
\]
which is nonzero according to Lemma~\ref{lemma:Laguerre}.
Hence we have the direct sum decomposition $A=\bigoplus_k \LL_D(A_k)$.

As in the proof of Theorem~\ref{thm:special}
we consider the multiplication map $m: A\otimes A \rightarrow A$,
and note that because $D$ is a derivation, and hence $D\circ m=m \circ(D\otimes\Id+\Id\otimes D)$, we have
\[
L_{p-1}^{(\theta(D))}(D)\circ m=m \circ L_{p-1}^{(\theta(D\otimes\Id+\Id\otimes D))}(D\otimes\Id+\Id\otimes D)
\]
for any polynomial $\theta(t)\in\F[t]$.
Fix eigenvalues $\rho,\sigma\in\F$ for $D$, view $m$ as restricted to a map
$A^{(\rho)}\otimes A^{(\sigma)}\rightarrow A^{(\rho+\sigma)}$,
and view the commuting linear operators $D\otimes\Id$ and $\Id\otimes D$ as restricted to
$A^{(\rho)}\otimes A^{(\sigma)}$.

We intend to evaluate the congruence of Proposition~\ref{prop:equation_for_L}
on $D\otimes\Id$ and $\Id\otimes D$ for $X$ and $Y$,
with $g(\rho)-h(D\otimes\Id)$ and $g(\sigma)-h(\Id\otimes D)$ for $\alpha$ and $\beta$.
To see that this makes sense we first need to check that the denominators
of the rational expressions
$c_i(\alpha,\beta)$ appearing in the congruence of Proposition~\ref{prop:equation_for_L}
evaluate to invertible linear maps on
$A^{(\rho)}\otimes A^{(\sigma)}$.
In fact,
$\alpha^{p^r}$ evaluates to
$\bigl(g(\rho)-h(D)\bigr)^{p^r}\otimes\Id$,
which, from what we saw earlier in the proof, acts on $A^{(\rho)}\otimes A^{(\sigma)}$
as scalar multiplication by
$g(\rho)^p$.
Together with the analogous fact about $\beta^{p^r}$, this shows that both
$(\alpha+\beta)^{p^r}$ and
$\bigl((\alpha+\beta)^p-(\alpha+\beta)\bigr)^{p^r}
=\bigl((\alpha+\beta)^{p-1}-1\bigr)^{p^r}(\alpha+\beta)^{p^r}$
evaluate to linear maps acting scalarly on
$A^{(\rho)}\otimes A^{(\sigma)}$,
by the scalars
$g(\rho)^p+g(\sigma)^p$
and
$\bigl(g(\rho)+g(\sigma)\bigr)^{p^2}-\bigl(g(\rho)+g(\sigma)\bigr)^p
=(\rho+\sigma)^{p^{r+1}}$,
respectively.
Whether $\rho+\sigma$ vanishes or not,
it follows that
$(\alpha+\beta)^{p-1}-1$
evaluates to an invertible linear map on
$A^{(\rho)}\otimes A^{(\sigma)}$,
in the former case because $\alpha+\beta$ evaluates to a nilpotent map.

All this can be stated more formally by saying that
evaluating both sides of the congruence of Proposition~\ref{prop:equation_for_L} amounts to apply a ring homomorphism from
$\F_p\bigl[\alpha,\beta,\bigl((\alpha+\beta)^{p-1}-1\bigr)^{-1},X,Y\bigr]$
to $\F[D]$, the subring of the ring $\End_{\F}(A^{(\rho)}\otimes A^{(\sigma)})$
of linear endomorphisms generated by $\F$ and $D$.
To ensure that such a homomorphism exists we have just checked
that $(\alpha+\beta)^{p-1}-1$ is mapped to an invertible element
of $\F[D]$.
In conclusion, both sides of the congruence of Proposition~\ref{prop:equation_for_L}
can be evaluated as described.
However, to draw any conclusion from this evaluation we need to
make sure that the ideal of the ring
$\F_p\bigl[\alpha,\beta,\bigl((\alpha+\beta)^{p-1}-1\bigr)^{-1},X,Y\bigr]$ generated by
$X^p-(\alpha^p-\alpha)$
and
$Y^p-(\beta^p-\beta)$
evaluates to zero.
This is so because both generators evaluate to zero.
In fact, the former evaluates to
\begin{multline*}
(D\otimes\Id)^p-
\Bigl(
\bigl(g(\rho)-h(D\otimes\Id)\bigr)^p-\bigl(g(\rho)-h(D\otimes\Id)\bigr)
\Bigr)
\\=
(D\otimes\Id)^{p^r}-
\bigl(g(\rho)^p-g(\rho)\bigr)
=
\Bigl(
D^{p^r}-
\bigl(g(\rho)^p-g(\rho)\bigr)
\Bigr)
\otimes\Id,
\end{multline*}
which acts as zero on
$A^{(\rho)}\otimes A^{(\sigma)}$.

The result of evaluating the congruence of Proposition~\ref{prop:equation_for_L} as described,
followed by composition with the multiplication map $m$, is that
the restriction of
\[
m\circ
L_{p-1}^{(g(\rho)-h(D\otimes\Id))}(D\otimes\Id)
\circ
L_{p-1}^{(g(\sigma)-h(\Id\otimes D))}(\Id\otimes D)
=
m\circ
\bigl(\LL_D
\otimes
\LL_D\bigr)
\]
to
$A^{(\rho)}\otimes A^{(\sigma)}$
coincides with the restriction of
\[
L_{p-1}^{(g(\rho)+g(\sigma)-h(D))}(D)
\circ m\circ
\biggl(
c_0(\alpha_0,\beta_0)+\sum_{i=1}^{p-1}c_i(\alpha_0,\beta_0)(D^i\otimes D^{p-i})
\biggr),
\]
where we have set $\alpha_0=g(\rho)-h(D\otimes\Id)$
and $\beta_0=g(\sigma)-h(\Id\otimes D)$
for the sake of readability.
This means that
\begin{equation}\label{eq:L_D_general}
\begin{aligned}
\LL_Dx \cdot \LL_Dy
=
\LL_D
\biggl(&
c_0\bigl(g(\rho)-h(D),g(\sigma)-h(D)\bigr)xy
\\&
+\sum_{i=1}^{p-1}c_i\bigl(g(\rho)-h(D),g(\sigma)-h(D)\bigr)D^ix\cdot D^{p-i}y\biggr)
\end{aligned}
\end{equation}
for $x\in A^{(\rho)}$ and $y\in A^{(\sigma)}$.

Because $D^p$ is a graded derivation of degree zero, it maps each
component $A_k$ of the grading into itself, and hence
$A_k=\bigoplus_{\rho\in\F}A_k\cap A^{(\rho)}$.
Because $m\mid pd$, the term $D^ix\cdot D^{p-i}y$ in Equation~\eqref{eq:L_D_general},
for $x\in A_k\cap A^{(\rho)}$
and $y\in A_\ell\cap A^{(\sigma)}$,
belongs to $A_{k+\ell}$ as well as the term $xy$.
Furthermore, each of the linear maps
$c_i\bigl(g(\rho)-h(D),g(\sigma)-h(D)\bigr)$ on the space $A^{(\rho+\sigma)}$,
for $0\le i<p$,
can be written as a polynomial map in $D^p$,
and hence sends $A_{k+\ell}\cap A^{(\rho+\sigma)}$
into itself, again because $D^p$ is a derivation of degree zero.
Hence Equation~\eqref{eq:L_D_general} tells us that
$\LL_Dx\cdot \LL_Dy\in \LL_D\bigl(A_{k+\ell}\cap A^{(\rho+\sigma)}\bigr)$.
In particular, we conclude that
$\LL_D A_k\cdot \LL_D A_\ell\subseteq \LL_D A_{k+\ell}$,
and so
$A=\bigoplus_k \LL_D(A_k)$
is a grading of $A$ over the integers modulo $m$.
\end{proof}

\begin{rem}
Restricted to the subalgebra $\ker(D^{p^r})$, where $\rho=0$, the map $\LL(D)$
coincides with that obtained by applying a variation of the Artin-Hasse exponential,
namely, the series $S(X)$ considered in~\cite[Section~3]{Mat:Artin-Hasse},
to which we refer the reader for details.
\end{rem}

\begin{rem}\label{rem:g}
Following~\cite[Section 1.5]{Strade:book} we sketch the construction of a $p$-polynomial
$g(T)=\sum_{i=r}^{n-1}b_{i}T^{p^i}$
such that $g(D)^p-g(D)=D^{p^r}$.
One way is to introduce a parameter $\lambda$ and impose that
$g(T)^p-g(T)-T^{p^r}=\lambda^p\sum_{i=r}^{n}a_iT^{p^i}$.
Starting from $b_{n-1}=\lambda$,
the equation recursively determines $b_h$ in terms of $\lambda$ as
$b_{h}^p=b_{h+1}+\lambda ^p a_{h+1}$,
for $h=n-2,n-3,\ldots,r$, and also forces
$-1-b_r=\lambda^pa_{r}$.
Hence
$b_{h}=-1-\sum_{k=r}^{h}\lambda^{p^{h+1-r}}a_k^{p^{h-k}}$, for $h=r,\ldots, n-1$,
where $\lambda$ is chosen among the roots of the polynomial
$1+T+\sum_{k=r}^{n-1}T^{p^{n-k}}a_k^{p^{n-1-k}}$.
\end{rem}

\section{Toral switching in restricted Lie algebras}\label{sec:toral_switching}

In this final section we discuss the connection with the \emph{toral switching} in modular Lie algebras.
Roughly speaking, this technique replaces a torus $T$ of a restricted Lie algebra $L$
with another torus $T_x$ which is more suitable for further study of $L$.
In the simplest and original setting of~\cite{Win:toral} this amounts to applying to $T$ the
exponential of the inner derivation $\ad x$, for some root vector $x\in L$ with respect to $T$.
Because $(\ad x)^2T=0$ the exponential of $\ad x$ can be taken to
be $1+\ad x$ for this purpose.
This is reminiscent of, and certainly motivated by,
the classical characteristic
zero situation where $\exp(\ad x)$ for some root vector $x$ is used to
conjugate a Cartan subalgebra into another.
However, in more general settings $(1+\ad x)T$ fails to be a
torus, and hence the construction of $T_x$ is slightly more involved.
This technique was originally introduced by Winter in~\cite{Win:toral} and later generalized by Block and Wilson
in~\cite{BlWil:rank-two}.
The most general version was finally produced by Premet
in~\cite{Premet:Cartan}.
An exposition of Premet's version can be found in~\cite[Section~1.5]{Strade:book}.

A crucial step in this process is to keep track of the root space decomposition with respect to the new torus,
by constructing linear maps from the root spaces with respect to $T$ onto the root spaces with respect to $T_x$.
Following Strade's exposition in~\cite[Section~1.5]{Strade:book}
we briefly sketch the construction of the new torus $T_x$ and of a linear map $E(x,\lambda)$
which connects the old and new root spaces.
Our goal is to show that $E(x,\lambda)$
coincides with the map $\LL_D$ of Theorem \ref{thm:general}, where $D=\ad x$.
This shows that the toral switching process, if we disregard the strictly Lie-theoretic aspects,
can be viewed as a special instance of Theorem \ref{thm:general}.
We only include enough details and notation to make the specialization of our results to toral switching readable
in conjunction with~\cite[Section~1.5]{Strade:book},
and refer to that source for more.

Let $L$ be a finite-dimensional restricted Lie algebra, over a perfect field $\F$ of positive chacteristic $p$, with $p$-mapping $[p]$.
Let $r$ be the difference between $\dim(L)$
and the maximum dimension of a torus of $L$
(but any larger integer would do).
In particular, $x^{[p^r]}$ is semisimple for each $x\in L$.
It is shown in~\cite[Section~1.5]{Strade:book} how to associate to each element $x$ of $L$ a
certain element $\xi(x,\lambda)$ of $L$,
which also depends on a choice of a certain admissible scalar
$\lambda\in\F$, itself depending on $x$.
This is done in a systematic `polynomial' way whose details we disregard here,
except for pointing out that
when $D=\ad x$ the map $\ad\xi(x,\lambda)$ plays the role of our $g(D)$
in the previous section.
The crucial property of $\xi(x,\lambda)$ is that
\begin{equation}\label{eq:xi}
\xi(x,\lambda)^{[p]}-\xi(x,\lambda)=x^{[p]^r}.
\end{equation}
Set $q(x)=\sum_{t=1}^{r-1}x^{[p]^t}$.
Strade then defines the map $E_{(x,\lambda)}$ as
\[
E_{(x,\lambda)}=
-\sum_{i=0}^{p-1}\biggl(\prod_{k=i+1}^{p-1}\bigl(\ad\xi(x,\lambda)-\ad q(x)+kId\bigr)\biggr)
(\ad x)^{i}.
\]

Now let $T$ be a torus of $L$ of maximal dimension, and let $L=\bigoplus_{\gamma \in \Gamma} L_{\gamma}$ be the corresponding root space decomposition
(where $\Gamma=\Gamma(L,T)$ in~\cite{Strade:book}).
Let $x \in L_{\beta}$ be a root vector (hence with $\beta \neq 0$) such that $x^{[p]^r}\in T$, whence each $L_{\gamma}$ is an eigenspace for
$\ad x^{[p]^r}=(\ad x)^{p^r}$.
It is stated in~\cite[Theorem 1.5.1]{Strade:book} that
$T_x=\{t-\beta(t)(\sum_{k=0}^{r-1}x^{[p]^k}): t \in T\}$
is also a torus of $L$, and that
$L=\bigoplus_{\gamma \in \Gamma} E_{(x,\lambda)}L_{\gamma}$ is the corresponding root space decomposition.
Note that
$(1+\ad x)t=t-\beta(t)x$ can be taken as an interpretation of $\exp(\ad x)t$
because $(\ad x)^2t=0$;
however, the elements used to define $T_x$ above are more complicated than that, in general.

We now show that the map $E_{(x,\lambda)}$
coincides with the map $\LL_D$ of our Theorem~\ref{thm:general}.
Setting $D=\ad x$ in the situation of~\cite[Theorem~1.5.1]{Strade:book} we have that $D^{p^r}$ is semisimple.
The polynomial $g$ of Section~\ref{sec:general_case} was defined in such a way that $g(D)=\ad\xi(x,\lambda)$, and obviously $h(D)=\ad q(x)$.
We know that $g(D)$ acts scalarly on any generalized eigenspace for $D$
(which for $\ad\xi(x,\lambda)$ can be deduced from
Equation~\eqref{eq:xi}).
Now any $L_{\gamma}$ (a root space, or $L_0$) is contained in the generalized eigenspace for $D$ with respect to the eigenvalue $\rho$, where $\rho\in\F$
is determined by
$\gamma(x^{[p^r]})=\rho^{p^r}$.
Then the map $E_{(x,\lambda)}$ acts on $L_{\gamma}$ as
\[
E_{(D,\lambda)}=-\sum_{i=0}^{p-1}\biggl(\prod_{k=i+1}^{p-1}\bigl(g(\rho)-h(D)+k\Id\bigr)\biggr)
D^{i}=L_{p-1}^{(g(\rho)-h(D))}(D),
\]
and therefore coincides with our $\LL_D$.

To conclude our comparison with toral switching we show that part of the information
given in~\cite[Theorem~1.5.1]{Strade:book}, namely, that
$\bigoplus_{\gamma\in\Gamma}E_{(x,\lambda)}L_{\gamma}$
is a grading of $L$ (over $\langle\Gamma\rangle$, the additive group generated by $\Gamma$), is a consequence of our
Theorem~\ref{thm:general}.
Of course, a crucial part of the toral
switching technique is that this grading is actually the root space decomposition of a new torus $T_x$,
but this part loses meaning in our more general setting where $A$ is an arbitrary nonassociative algebra.

In loose terms, toral switching modifies the original
grading (that is, root space decomposition) in only one direction and does not affect it in suitably
complementary directions.
Our formulation of Theorem~\ref{thm:general} for a cyclic grading
means that it focuses on the one `direction' where the switching
takes place, and so we need a little work to
isolate that direction before Theorem~\ref{thm:general} becomes
applicable to the toral switching setting.

Because $\beta(t^{[p]})=\beta(t)^p$ for $t\in T$
(see~\cite[Equation~(1.3.2)]{Strade:book}), the maximal subspace
$T_0:=\ker(\beta)=\{t\in T:\beta(t)=0\}$
is a $p$-subalgebra of the torus $T$, and hence a torus itself.
The restriction $\gamma\mapsto\gamma_{T_0}$ to $T_0$ gives a surjective $\F$-linear map
$T^\ast\to T_0^\ast$, with kernel spanned by $\beta$.
Let $\Gamma_0$ be the image of $\Gamma$ under this restriction map.
Note that the subgroup $\langle\Gamma_0\rangle$ generated by $\Gamma_0$
has rank one less than the rank of $\langle\Gamma\rangle$.
Choose a toral element $t_1\in T$ (hence $t_1^{[p]}=t_1$) with $\beta(t_1)=1$;
this can be done because $T$ is spanned by toral elements.
We have a group isomorphism of $\langle\Gamma\rangle$ with the direct product
$\F_p\times\langle\Gamma_0\rangle$, where to $\gamma$ there corresponds the
pair $\bigl(\gamma(t_1),\gamma_{T_0}\bigr)$.

For $\gamma_0\in\Gamma_0$
the sum
$L_{\gamma_0}:=\bigoplus_{\gamma\in\Gamma\colon\gamma_{T_0}=\gamma_0}L_{\gamma}$
is a root space for the torus $T_0$.
Hence
$L=\bigoplus_{\gamma_0\in\Gamma_0}L_{\gamma_0}$
is the root space decomposition of $L$ with respect to $T_0$.
Similarly,
$L=\bigoplus_{k\in\F_p}L_k$,
where
$L_k:=\bigoplus_{\gamma\in\Gamma\colon\gamma(t_1)=k}L_{\gamma}$,
is the root space decomposition of $L$ with respect to the torus
spanned by $t_1$.
The root space decomposition of $L$ with respect to $T$ can be
viewed as a grading
\[
L=\bigoplus_{(k,\gamma_0)\in\F_p\times\langle\Gamma_0\rangle}L_k\cap L_{\gamma_0}
\]
over $\F_p\times\langle\Gamma_0\rangle$.
Now our Theorem~\ref{thm:general} applies, with $m=p$, to the grading
$L=\bigoplus_{k\in\F_p}L_k$,
and yields a grading
$L=\bigoplus_{k\in\F_p}\LL_DL_k$.
(Following~\cite{Strade:book} one may show that this is the root space decomposition with respect to the torus spanned by $t_1-x-h(x)$,
but we may ignore this fact here.)

Because $[T_0,x]=0$, the derivation $D=\ad x$ commutes with $\ad t$ for each $t\in T_0$.
Consequently, each $L_{\gamma_0}$ is invariant under the linear map $\LL_D$, because the latter can be expressed as a
polynomial in $D$ on $L_{\gamma_0}$.
Therefore, $\LL_DL_{\gamma_0}=L_{\gamma_0}$ for each $\gamma_0\in\Gamma_0$, being
$\LL_D$ bijective, and hence
$\LL_D(L_k\cap L_{\gamma_0})=\LL_DL_k\cap L_{\gamma_0}$
for
$(k,\gamma_0)\in\F_p\times\langle\Gamma_0\rangle$.
Because both of
$L=\bigoplus_{k\in\F_p}\LL_DL_k$
and
$L=\bigoplus_{\gamma_0\in\Gamma_0}L_{\gamma_0}$
are gradings (according to Theorem~\ref{thm:general} in case of the
former), the direct sum decomposition
\[
L=
\bigoplus_{(k,\gamma_0)\in\F_p\times\langle\Gamma_0\rangle}
\LL_D(L_k\cap L_{\gamma_0})
=
\bigoplus_{(k,\gamma_0)\in\F_p\times\langle\Gamma_0\rangle}
\LL_DL_k\cap L_{\gamma_0}
\]
is a grading as well.
This is equivalent to saying that
$\bigoplus_{\gamma\in\Gamma}\LL_DL_{\gamma}$
is a grading of $L$, as we wanted to prove.

\bibliography{References}
\end{document}